\numberwithin{equation}{section}
\newtheorem{theorem}{Theorem}[section]
\newtheorem{proposition}[theorem]{Proposition}
\newtheorem{lemma}[theorem]{Lemma}
\newtheorem{remark}[theorem]{Remark}
\newtheorem{open}[theorem]{Open problem}
\newtheorem{definition}[theorem]{Definition}
\theoremstyle{definition}
\renewcommand{\epsilon}{\eps}
\newcommand{\Ch}{{\rm Ch}}
\newcommand{\N}{{\mathbb N}}
\newcommand{\R}{{\mathbb R}}
\newcommand{\eps}{\varepsilon}
\newcommand{\pnorm}[2][]{\if #1'' \left|#2\right|_p \else \left|#2\right|_{#1} \fi}
\renewcommand{\theta}{\vartheta}
\title{New characterizations of Sobolev metric spaces}
\author[S. Di Marino]{Simone Di Marino}
\author[M.\ Squassina]{Marco Squassina}
\address[S. Di Marino]{Indam \newline\indent 
	Scuola Normale Superiore \newline\indent
	Piazza dei Cavalieri 7, 56126 Pisa, Italy}
\email{simone.dimarino@altamatematica.it}
\address[M.\ Squassina]{Dipartimento di Matematica e Fisica \newline\indent
	Universit\`a Cattolica del Sacro Cuore \newline\indent
	Via dei Musei 41, I-25121 Brescia, Italy}
\email{marco.squassina@unicatt.it}
\thanks{The authors are  members 
	of {\em Gruppo Nazionale per l'Analisi Ma\-te\-ma\-ti\-ca, la Probabilit\`a e le loro Applicazioni} (GNAMPA) of the {\em Istituto Nazionale di Alta Matematica} (INdAM)}
\subjclass[2010]{46E35, 28D20, 82B10, 49A50}
\keywords{Bourgain-Brezis-Mironescu and Nguyen type limits, Hajlasz-Sobolev spaces}
\begin{document}

\begin{abstract}
	We provide new characterizations of Sobolev ad BV spaces in doubling and Poincar\'e metric spaces in the
	spirit of the Bourgain-Brezis-Mironescu and Nguyen limit formulas holding 
	in domains of $\R^N$.
\end{abstract}
\maketitle

\section{Introduction}
\subsection{Overview}
Around 2001, J.\ Bourgain, H.\ Brezis and P.\ Mironescu, investigated 
\cite{bourg,bourg2,bre}  the asymptotic behaviour of a class on nonlocal functionals on a domain $\Omega\subset\R^N$, including those related to the norms 
of the fractional Sobolev space $W^{s,p}(\Omega),$ as $s\nearrow 1$. More precisely,  if $p\geq 1$ and $u\in W^{1,p}(\Omega)$, then 
\begin{equation*}
	\lim_{s \nearrow 1} (1-s)\int_\Omega\int_{\Omega} \frac{|u (x) -  u(y)|^p}{|x - y|^{N+ps}} \, d{\mathscr L}^N(x) \, d{\mathscr L}^N(y) = K_{p,N} \int_{\Omega} |\nabla u|^p \,d{\mathscr L}^N(x),
\end{equation*}
where $|\cdot|$ denotes the Euclidean norm, ${\mathscr L}^N$ the Lebesgue measure on $\R^N$ and
\begin{equation*}
	K_{p,N}=\frac{1}{p}\int_{{\mathbb S}^{N-1}} |{\boldsymbol \omega}\cdot x|^pd{\mathscr H}^{N-1} ,
\end{equation*}
being ${\boldsymbol \omega}\in {\mathbb S}^{N-1}$ arbitrary. By replacing
the Euclidean distance $|x - y|$ with a distance $d_K(x,y)=\|x - y\|_K$, where $K$ denotes the unit ball for $\|\cdot\|_K$,
it was recently proved \cite{anisotropic} that, if $u\in W^{1,p}(\Omega)$, then
\begin{equation*}
\lim_{s \nearrow 1} (1-s)\int_{\Omega}\int_{\Omega} 
\frac{|u (x) -  u(y)|^p}{d_K(x,y)^{N+ps}} \, d{\mathscr L}^N(x) \, d{\mathscr L}^N(y)= \int_{\Omega} \|\nabla u\|^p_{Z^*_pK}\, d{\mathscr L}^N(x),
\end{equation*}
where we have set 
\begin{equation*}
\|\xi\|_{Z^*_pK}:=\left(\frac{N+p}{p}\int_K |\xi\cdot x|^p_p \, d{\mathscr L}^N(x)\right)^{1/p},\quad\,\, \xi\in\R^N.
\end{equation*}
Similar results hold for BV spaces \cite{davila,ponce} and for magnetic Sobolev spaces \cite{pinaioevecchi} and criteria for recognizing constants
among measurable functions can be obtained \cite{bre}.
The nonlocal norms thus converge in the limit as $s \nearrow 1$ to a Dirichlet type energy which depends on $p,N$ and on 
the distance $d_K$. 
More in general, it is natural to wonder if similar characterizations may hold for some classes of BV and Sobolev
spaces on a metric measure space $(X,d,\mu)$ in place of $\R^N$, at least in the case where some structural assumption is assumed on 
the measure $\mu$ acting on $X$. The general definition of Sobolev and BV space will be given in Section \ref{sec:sob}, and for every Sobolev function $u$ it is defined a weak upper gradient $|\nabla u|_w \in L^p(X)$, along with its \textit{Cheeger energy} (introduced by Cheeger in \cite{Ch99}) 
$$
\Ch_p (u):= \int_X | \nabla u|_w^p \, d\mu. 
$$
In particular  $\Ch_p$ will be l.s.c. with respect to the strong convergence in $L^p$ and so is a good generalization of the Dirichlet energy in an Euclidean context, where the two notion coincide. Moreover $W^{1,p}(X,d,\mu)$ is a Banach space with the norm $\|u\|_{1,p} = (\| u \|_{L^p}^p + \Ch_p(u))^{1/p}$.

We recall here (using  \cite{KMac, KZ}), that  whenever $\mu$ is doubling and it satisfies a $(1,p)$-Poincar\'e inquality, $W^{1,p}(X,\mu,d)$ coincides with the {\em Hajlasz-Sobolev} space, that is the space of $u\in L^p(X,\mu)$ 
such that there exists $g\in L^p(X,\mu)$ with
\begin{equation}
\label{difcond}
|u(x)-u(y)|\leq d(x,y)(g(x)+g(y)),\quad\text{for $\mu$ a.e. $x,y\in X$.}
\end{equation}
Moreover we can choose $g$ such that  $\|g\|_p^p \leq C\cdot \Ch (u)$.

Furthermore if $p>1$ and $\Omega\subset\R^N$ is an extension domain, then $W^{1,p}(\Omega,{\mathscr L}^N,d)$
coincides with the usual space $W^{1,p}(\Omega)$ and the norms are equivalent.

For any $p\geq 1$ and $0<s<1$, the fractional space ${\mathcal H}^{s,p}(X,\mu,d)$ can be defined  
as the space of $u\in L^p(X,\mu)$ such that the Gagliardo seminorm $[u]_{{\mathcal H}^{s,p}(X)}$ is finite, where
$$
[u]_{{\mathcal H}^{s,p}(X)}:=\left(\int_X \int_X \frac{ |u(x) - u(y)|^p}{d(x,y)^{ps}\rho(x,y)}  \, d \mu(x) \, d \mu(y)\right)^{1/p},
$$
and $\rho$ is a doubling kernel for $\mu$ (see Definition \ref{def:doubker}).
A {\em fractional counterpart} of the Hajlasz-Sobolev spaces can also be introduced as follows. 
For $0<s<1$ we define $W^{s,p}(X,\mu,d)$ as the spaces of $u\in L^p(X,\mu)$ such that
there is a function $g\in L^p(X,\mu)$ with
$$
|u(x)-u(y)|\leq d^s(x,y)(g(x)+g(y)),
$$
for almost any $x,y\in X$. When the measure is $N$-Ahlfors it follows (see \cite{compara}) that
$$
{\mathcal H}^{s,p}(X,\mu,d)\hookrightarrow W^{s,p}(X,\mu,d) \hookrightarrow {\mathcal H}^{s-\eps,p}(X,\mu,d),
$$
for all $\eps\in (0,s)$, so that the two spaces are comparable.

\vskip3pt
\noindent
The main goal of this paper is to provide a proof of the connection between 
$$
\limsup_{ s \nearrow 1}  (1-s) \int_X \int_X \frac{ |u(x) - u(y)|^p}{d(x,y)^{ps}\rho(x,y)}  \, d \mu(x) \, d \mu(y)<+\infty 
$$
and $u\in W^{1,p}(X)$ for $p>1$ or $u\in BV(X)$ for $p=1$.  
A second characterization we want to provide is in terms of the family nonlocal integrals
$$
u\mapsto \iint_{\{|u(x)-u(y)|>\delta\}} \frac{\delta^p}{d(x,y)^{p}\rho(x,y)}  \, d \mu(x) \, d \mu(y) .
$$
In the Euclidean case $X=\R^N$, Nguyen \cite{ngu1,NgSob2,Ng11,NgGamma} 
(see also the recent works \cite{bre-linc,BHN3,BHN2,BHN-0,BrNgu} by Brezis and Nguyen)
proved that, if $p>1,$ then $u\in W^{1,p}(\R^N)$ if and only if $u\in L^p(\R^N)$ and 
$$
\sup_{0<\delta<1}\iint_{\{|u(x)-u(y)|>\delta\}} \frac{\delta^p}{|x-y|^{N+p}}  \, d{\mathscr L}^N(x) \, d {\mathscr L}^N(y)<+\infty,
$$
in which case
$$
\lim_{\delta\searrow 0}\iint_{\{|u(x)-u(y)|>\delta\}} \frac{\delta^p}{|x-y|^{N+p}}  \, d{\mathscr L}^N(x) \, d {\mathscr L}^N(y)
=K_{p,N}\int_{\R^N} |\nabla u|^p \,d{\mathscr L}^N(x).
$$
In the case $p=1$ this property fails, in general \cite{BrNgu}.

\subsection{Main results}
In the following $(X,d,\mu)$ denotes a metric measure space with measure $\mu$.

\begin{definition}[Doubling]\rm 
We say that $\mu$ is a doubling measure  if there exists a constant $c_D$ such that 
$$
 \mu (B(x,2r))\leq c_D \mu(B(x,r)),\quad \text{for all $x\in {\rm supp}(\mu)$ and any $r>0$.}
$$
\end{definition}

\begin{definition}[Doubling kernel]\label{def:doubker} \rm 
Let $(X,d, \mu)$ be a metric space with $\mu$ doubling. We say $\rho : X \times X \to \mathbb{R}$ is a \emph{doubling kernel} if there exists a constant $C_{\rho}>0$ such that 
$$
 \frac 1{C_{\rho}} \mu (B(x,d(x,y)))\leq \rho(x,y) \leq C_{\rho} \mu(B(x,d(x,y)) ,\quad \text{for all $x, y \in {\rm supp}(\mu)$.}
$$
\end{definition}

There are several examples of doubling kernels used in the literature: here we list a few, denoting with $\rho_1(x,y)=\mu(B(x,d(x,y))$ and $\rho_2(x,y)=\mu(B(y, d(x,y))$

$$ \rho_1, \quad \rho_2, \quad \rho_1+\rho_2, \quad \frac{ \rho_1 + \rho_2 }{\rho_1 \rho_2} , \quad \sqrt{ \rho_1 \rho_2 } , $$
and in general $f(\rho_1, \rho_2)$ where $\min \{t,s\} \leq f(t,s) \leq \max \{t,s\}$. In the special case when $\mu$ is $N$-ahlfors, also $d(x,y)^N$ is a doubling kernel.

\begin{definition}[Poincar\'e inequality]\rm 
	We say that $\mu$ satisfies a $(1,p)$-Poincar\'e inequality if there is $c_P>0$ such that for any ball $B\subset X$ of radius $t>0$
	\begin{align*}
	\int_B | u_B - u(x)|^p \, d \mu(x) &\leq t^p c_P \int_{B} g^p(x) \, d \mu(x), \quad u\in W^{1,p}(X), \,\, \text{(Sobolev case),}\\
	\int_B | u_B - u(x)| \, d \mu(x) &\leq t c_P |Du|(B), \quad u\in BV(X),\,\, \text{(BV case)}.
	\end{align*}
\end{definition}

Notice that this definition is a bit different and less general than the usual one, that allows the integral on the right hand side to be performed over a larger ball $B(x,\tau r)$, for some $\tau \geq 1$. We prefer to stick to this version since the proof becomes clearer, but of course modifications can be done in order to fit the more general definition.

\vskip5pt
The main results of the paper are the following.

\begin{theorem}[BBM type characterization]
	\label{main}
Let $p\geq 1$. Assume that $(X,d,\mu)$ is a metric measure space and $\mu$ is doubling and satisfies a $(1,p)$-Poincar\'e inequality. Let $\rho$ be a doubling kernel: 
then there exist $C_U>0$ and $C_L>0$ depending on $p,N,C_{\rho},c_P,c_D$ such that for every $u \in L^p(X)$ we have:
\begin{align*}
& \limsup_{ s \nearrow 1}  (1-s) \int_X \int_X \frac{ |u(x) - u(y)|^p}{\rho(x,y)d(x,y)^{ps}}  \, d \mu(x) \, d \mu(y)  \leq C_U \Ch_p(X), \\
& \liminf_{ s \nearrow 1}  (1-s) \int_X \int_X \frac{ |u(x) - u(y)|^p}{\rho(x,y)d(x,y)^{ps}}  \, d \mu(x) \, d \mu(y)  \geq C_L \Ch_p(X).
\end{align*}
\end{theorem}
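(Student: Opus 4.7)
The plan is to prove the two bounds by rather different arguments, both hinging on the identification of $W^{1,p}(X,d,\mu)$ with the Hajlasz-Sobolev space granted by the doubling and $(1,p)$-Poincar\'e assumptions.

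For the upper bound I would start from the Hajlasz gradient $g\in L^p(X)$ satisfying $|u(x)-u(y)|\leq d(x,y)(g(x)+g(y))$ $\mu$-a.e., with $\|g\|_p^p\leq C\,\Ch_p(u)$. Inserting this bound and symmetrizing, the estimate reduces to controlling $I_s(x):=\int_X d(x,y)^{p(1-s)}/\rho(x,y)\, d\mu(y)$ pointwise in $x$. Splitting $I_s$ into the near region $\{d(x,y)\leq 1\}$ and its complement, a dyadic annular decomposition together with the doubling of $\mu$ and the kernel estimate $\rho(x,y)\sim\mu(B(x,d(x,y)))$ gives on the near region a geometric sum $\sim 1/(1-2^{-p(1-s)})$, which after multiplication by $(1-s)$ tends to the finite constant $1/(p\log 2)$. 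The far region is treated using the elementary bound $|u(x)-u(y)|^p\leq 2^{p-1}(|u(x)|^p+|u(y)|^p)$ and the same bookkeeping, producing a contribution $\lesssim (1-s)\|u\|_p^p\to 0$. Together these yield $\limsup\leq C_U\,\Ch_p(u)$.

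For the lower bound I would introduce the fractional gradient
\begin{equation*}
g_s(x)^p:=(1-s)\int_X\frac{|u(x)-u(y)|^p}{\rho(x,y)\,d(x,y)^{ps}}\, d\mu(y),
\end{equation*}
so that, using the essential symmetry of $\rho$ inherited from doubling, $\|g_s\|_p^p$ is comparable to the nonlocal quantity. A direct computation using the kernel bound on a ball $B=B(x_0,r)$ produces the fractional Poincar\'e inequality $\mu(B)^{-1}\int_B|u-u_B|^p\, d\mu\leq Cr^{ps}(1-s)^{-1}\mu(B)^{-1}\int_B g_s^p\, d\mu$. A chaining argument along the dyadic sequence $B(x,2^{-k}r)$ at a Lebesgue point $x$ of $u$ telescopes this into the pointwise Hajlasz-$s$ inequality
\begin{equation*}
|u(x)-u(y)|\leq d(x,y)^s\bigl(\tilde g_s(x)+\tilde g_s(y)\bigr)\qquad\mu\text{-a.e.},
\end{equation*}
where $\tilde g_s$ is a maximal function of $g_s/(1-s)^{1/p}$ whose $L^p$-norm is kept uniformly bounded in $s$ thanks to doubling and the Hardy-Littlewood maximal inequality.

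The main obstacle I anticipate is the passage to the limit $s\nearrow 1$: to upgrade a family of Hajlasz-$s$ gradients into a bona fide Hajlasz-$1$ gradient for $u$, one extracts a weakly convergent subsequence $\tilde g_{s_n}\rightharpoonup h$ in $L^p(X)$, and Mazur's lemma yields convex combinations converging $\mu$-a.e. Since $d(x,y)^{s_n}\to d(x,y)$ pointwise, this lets one pass to the limit in the pointwise Hajlasz-$s$ inequality and recover $|u(x)-u(y)|\leq Cd(x,y)(h(x)+h(y))$ a.e.\ with $\|h\|_p^p\leq C\liminf_{s\nearrow 1}(1-s)\iint \cdots$. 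The Hajlasz-Sobolev identification then delivers $\Ch_p(u)\leq C'\liminf$. The $p=1$ (BV) case is entirely analogous, using the BV-Poincar\'e inequality and identifying the weak limit with the density of the total variation measure.
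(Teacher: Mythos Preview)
Your upper bound via the Hajlasz gradient is correct for $p>1$ and is in fact more direct than the paper's route, which rewrites the double integral through the layer-cake identity $d(x,y)^{-ps}=ps\int_{d(x,y)}^\infty t^{-ps-1}\,dt$, introduces the Korevaar--Schoen-type energy
\[
\mathcal S_t=\int_X\frac{1}{\mu(B(x',t))^2}\iint_{B(x',t)\times B(x',t)}|u(x)-u(y)|^p\,d\mu(x)\,d\mu(y)\,d\mu(x'),
\]
and bounds $\mathcal S_t\le Ct^p\,\Ch_p(u)$ directly from the Poincar\'e inequality. However, your argument breaks down at $p=1$: a genuine $BV$ function (e.g.\ the indicator of a ball in $\R^N$) has $\Ch_1(u)=|Du|(X)<\infty$ but admits no Hajlasz gradient $g\in L^1$, so the very first step is unavailable. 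The paper's Poincar\'e-based approach handles the $BV$ case without change.

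Your lower bound has a real gap already for $p>1$. Chaining from the fractional Poincar\'e inequality
\[
\frac{1}{\mu(B)}\int_B|u-u_B|^p\,d\mu\le \frac{Cr^{ps}}{(1-s)\,\mu(B)}\int_B g_s^p\,d\mu
\]
does \emph{not} yield a Hajlasz-$s$ gradient of the form $M(g_s)$; only averages of $g_s^p$ appear on the right, so what comes out is $\bigl(M(g_s^p)\bigr)^{1/p}$. Your claimed $L^p$-bound then becomes $\|M(g_s^p)\|_1\le C\|g_s^p\|_1$, i.e.\ the $L^1$-boundedness of the Hardy--Littlewood maximal operator, which is false. (Passing from a $(1,p)$-Poincar\'e to a Hajlasz gradient normally needs the Keith--Zhong self-improvement to a $(1,p-\eps)$-Poincar\'e; you have no analogue for the derived fractional inequality.) For $p=1$ things are worse: bounded sequences in $L^1$ need not have weakly convergent subsequences, and even a successful limit would only place $u$ in the Hajlasz space, not in $BV$.

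The paper avoids all of this. It shows that a fixed multiple of
\[
g_t(x')=\frac{1}{\mu(B(x',t))^2}\iint_{B(x',t)\times B(x',t)}\frac{|u(x)-u(y)|}{t}\,d\mu(x)\,d\mu(y)
\]
is an \emph{upper gradient up to scale $t/2$} of the local average $M_t u$, while Jensen gives $\int_X g_t^p\,d\mu\le \mathcal S_t/t^p$. A semicontinuity lemma (upper gradients at vanishing scale, together with $M_t u\to u$ in $L^p$) then delivers $\liminf_{t\to 0}\mathcal S_t/t^p\ge c\,\Ch_p(u)$ directly---no maximal function, no Hajlasz identification, and the argument carries over verbatim to $p=1$.
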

\vskip3pt

In the case $p>1$, Theorem~\ref{main} was already obtained 
in \cite{munnier} with a different and more involved technique, while the BV case, 
to the best of our knowledge, was open. The details in \cite{munnier} are present only 
for Ahlfors measures, in which case an upper bound is firstly obtained on balls by exploiting the definition \eqref{difcond} and 
$$
\sup_{0<s<1}(1-s)\int_{B(y,r)}\frac{1}{d(x,y)^{N-p(1-s)}}d\mu(x)<+\infty,
\,\quad\text{for $\mu$ a.e. $y\in X$ and all $r>0$,}
$$
which essentially follows from the fact that the measure of the balls of radius $t$ grows $N$-polynomially. 
On the contrary, the lower bound in \cite{munnier} is extremely involved and based, among other tools, upon some deep differentiation
result contained in \cite{cheeger}, which says that every Lipschitz map from $X$ into a Banach space with
the Radon-Nikodym Property is almost everywhere differentiable. 

\vskip3pt
The following result is instead new in metric spaces, up to our knowledge.

\begin{theorem}[Nguyen type characterization]
	\label{main2}
Let $p>1$. Assume that $(X,d,\mu)$ is a metric measure space and $\mu$ is doubling and satisfies a $(1,p)$-Poincar\'e inequality. Let $\rho$ be a doubling kernel: then there exist $C_U>0$ and $C_L>0$ depending on $p,N,C_{\rho},c_P,c_D$ such that for every $u \in L^p(X)$ we have:
$$	C_L \Ch_p(u) \leq \limsup_{ \delta \searrow 0}  \mathop{\int_X\int_X}_{\{|u(x)-u(y)|>\delta\}} \frac{\delta^p}{\rho(x,y)d(x,y)^{p}}  \, d \mu(x) \, d \mu(y)  \leq C_U \Ch_p(u). $$
\end{theorem}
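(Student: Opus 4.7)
The proof of Theorem~\ref{main2} reduces to two inequalities: the upper bound $\limsup_{\delta\searrow 0}E_\delta(u)\le C_U\,\Ch_p(u)$, where $E_\delta(u)$ denotes the integral appearing in the statement, and the lower bound $C_L\,\Ch_p(u)\le\limsup_{\delta\searrow 0}E_\delta(u)$. The plan is to treat the first directly via the Hajlasz characterisation and the second by reduction to the BBM inequality of Theorem~\ref{main}.

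For the upper bound, fix $u\in W^{1,p}(X,d,\mu)$ and use the Hajlasz--Sobolev characterisation \eqref{difcond} to pick $g\in L^p(X)$ with $|u(x)-u(y)|\le d(x,y)(g(x)+g(y))$ and $\|g\|_p^p\le C\,\Ch_p(u)$. The set $\{|u(x)-u(y)|>\delta\}$ is contained in $\{d(x,y)g(x)>\delta/2\}\cup\{d(x,y)g(y)>\delta/2\}$, so by symmetry it suffices to control, for fixed $x$,
$$\delta^p\int_{\{d(x,y)>\delta/(2g(x))\}}\frac{d\mu(y)}{\rho(x,y)d(x,y)^p}.$$
A dyadic decomposition into annuli $\{2^{k}r\le d<2^{k+1}r\}$ with $r=\delta/(2g(x))$, together with the doubling and doubling-kernel properties, bounds this by $C\,g(x)^p$. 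Integrating over $x$ and adding the symmetric term yields $E_\delta(u)\le C\|g\|_p^p\le C\,C_U\,\Ch_p(u)$, uniformly in $\delta$.

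The plan for the lower bound is to show that $M:=\limsup_{\delta\searrow 0}E_\delta(u)<\infty$ forces
$$\limsup_{s\nearrow 1}(1-s)\iint_{X\times X}\frac{|u(x)-u(y)|^p}{\rho(x,y)d(x,y)^{ps}}\,d\mu(x)\,d\mu(y)\le C\,M,$$
whence the lower half of Theorem~\ref{main} gives $C_L\,\Ch_p(u)\le C\,M$. By truncating $u$ at height $L$ and letting $L\to\infty$ (stable on both sides since $p>1$), I may assume $u\in L^\infty(X)$, so that $E_\delta(u)\equiv 0$ for $\delta\ge 2\|u\|_\infty$. The double layer-cake identity
$$|u(x)-u(y)|^p\,d(x,y)^{p(1-s)}=p^2(1-s)\int_0^\infty\!\!\int_0^\infty\delta^{p-1}r^{p(1-s)-1}\mathbf{1}_{\{|u(x)-u(y)|>\delta,\,d(x,y)>r\}}\,d\delta\,dr$$
rewrites $(1-s)$ times the BBM functional as a weighted double integral of $G(\delta,r):=\iint_{\{|u-u|>\delta,\,d>r\}}d\mu(x)d\mu(y)/(\rho(x,y)d(x,y)^p)$, and two competing bounds on $G$ will drive the argument: the tautology $G(\delta,r)\le E_\delta(u)/\delta^p$ and the tail estimate $G(\delta,r)\le C\|u\|_p^p/(\delta^p r^p)$, the latter coming from $\{|u(x)-u(y)|>\delta\}\subset\{|u(x)|>\delta/2\}\cup\{|u(y)|>\delta/2\}$ combined with Chebyshev and the doubling-kernel estimate $\int_{\{d(x,y)>r\}}d\mu(y)/(\rho d^p)\le C/r^p$.

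Combining these bounds at the crossover $r_\ast(\delta)\asymp(C\|u\|_p^p/E_\delta(u))^{1/p}$, the $r$-integral absorbs one factor of $(1-s)^{-1}$; then splitting the $\delta$-integral at a threshold $\delta_\eta$ with $E_\delta\le M+\eta$ for $\delta<\delta_\eta$ (while using the uniform bound $E_\delta\le C\|u\|_p^p$ on the intermediate range and the vanishing beyond $2\|u\|_\infty$) and letting $s\nearrow 1$, then $\eta\searrow 0$, is expected to produce the required bound $C\,M$. The main obstacle is precisely the $(\delta,r)$ bookkeeping in this last step: because $E_\delta$ does \emph{not} vanish as $\delta\searrow 0$, the remaining factor $(1-s)$ must absorb a would-be logarithmic divergence in $\delta$ near $0$, and only the $\limsup$ of $E_\delta$ (not the full supremum) should enter the final constant, which forces the $\eta$-regularisation and careful control of the subleading terms produced by the truncation and by the $r^{p(1-s)}\to 1$ asymptotics.
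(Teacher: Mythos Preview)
Your upper bound is correct and essentially identical to the paper's: both use the Hajlasz inequality together with the annular estimate of Lemma~\ref{lem:key}.

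The lower bound, however, has a genuine gap. After your $r$--integration with the crossover $r_*(\delta)$, one obtains (up to constants)
\[
(1-s)\,\mathrm{BBM}_s \;\le\; \frac{p(1-s)}{s}\,(C\|u\|_p^p)^{1-s}\int_0^{2\|u\|_\infty}\frac{E_\delta^{\,s}}{\delta}\,d\delta.
\]
The remaining $\delta$--integral is \emph{not} a ``would-be'' logarithmic divergence: since $\limsup_{\delta\to0}E_\delta=M>0$, one has $E_\delta^{\,s}/\delta\gtrsim c/\delta$ along a set of $\delta$'s accumulating at $0$, and in fact on the whole interval $(0,\delta_\eta)$ one only knows the upper bound $E_\delta\le M+\eta$, which gives $\int_0^{\delta_\eta}E_\delta^{\,s}/\delta\,d\delta\le(M+\eta)^s\cdot(+\infty)$. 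No choice of $\eta$ or further splitting repairs this, because neither of your two bounds on $G(\delta,r)$ is useful when both $\delta$ and $r$ are small (there $G(\delta,r)$ is genuinely large), and that region carries all the mass. The reduction to Theorem~\ref{main} therefore fails as written; the two functionals are only comparable \emph{through} $\Ch_p(u)$, which is precisely what you are trying to bound.

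The paper avoids this by never passing through the BBM functional. Instead it integrates $\int_0^r \eps\,\delta^{\eps-1}E_\delta\,d\delta$ (a finite integral bounded by $(M+\eta)r^\eps$ once $r<\delta_\eta$), which by Fubini produces the \emph{truncated} energy $\iint \min\{|u(x)-u(y)|,r\}^{p+\eps}/(\rho\,d^p)$. This truncation at level $r$ is the missing idea: it kills the divergence and yields a Korevaar--Schoen--type quantity whose density $g_t$ is, up to constants, an upper gradient at scale $t$ for $M_t(\varphi\circ u)$ for every $1$--Lipschitz $\varphi$ with values in $[0,r]$. Extracting a weak $L^p$ limit $h$ of $g_{t_\eps}/t_\eps$ and applying Lemma~\ref{lem:delta0} gives that $h$ is a $p$--weak upper gradient of each $\varphi\circ u$; a separate technical lemma (Lemma~\ref{lem:tech}) then patches these compositions to conclude that $h$ is a $p$--weak upper gradient of $u$ itself, whence $\Ch_p(u)\le\|h\|_p^p\le C\,M$.
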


We will now outline the proof of the results. 

In the case of the BBM type characterization the key tool is a clever use of Fubini theorem that let us compare the quantity we want to estimate with 
$$ {\mathcal S}_t := \int_X \frac 1{ \mu(B(x',t))^2}\iint_{B(x',t) \times B(x',t)} |u(x)-u(y)|^p \, d \mu(x) \, d \mu (y) \, d \mu (x'),$$
which is very reminiscent of the Korevaar and Shoen  definition of Sobolev functions \cite{KS}. As a result of this estimate we show that, in order to conclude,  it is sufficient to have a good bound on the liminf/limsup of $\frac { {\mathcal S }_t}{t^p}$ as $t \to 0$. Then an easy application of the Poincar\'e inequality will give us the upper bound while for the lower bound we use Lemma \ref{lem:delta0}, and the fact that $\frac { {\mathcal S }_t}{t^p}$ can be seen as the energy of $g_{t}$, which, up to a constant, is an upper gradient up to scale $t/2$ of the function $u^t$, that in turn is an approximation of $u$.

As for the Nguyen-type characterization, for the upper bound we use the Hajlasz-Sobolev characterization of Sobolev functions, while for the lower bound we again use cleverly Fubini (as done by Nguyen in its original work \cite{ngu1}), and then we use again Lemma \ref{lem:delta0}, but this time the proof is more involved because the estimate is not so direct.

\begin{remark}\rm
	Concerning the case $p=1$ in the previous Theorem \ref{main2}, in general, already in the Euclidean case, the assertion cannot
	hold true, in the sense that examples can be found \cite{p-uno,BrNgu} of functions $u$ in $W^{1,1}(\Omega)$ such that  
	$$
	\lim_{ \delta \searrow 0}   \mathop{\int_\Omega\int_\Omega}_{\{|u(x)-u(y)|>\delta\}} \frac{\delta}{|x-y|^{N+1}}  \, d {\mathcal L}^N(x) \, d {\mathcal L}^N(y)=+\infty.
	$$
	Moreover it is desirable to have a lower bound of the $\liminf$ as in Theorem \ref{main}, but this is more difficult and in the euclidean context it was solved in \cite{p-uno}.
\end{remark} 

\begin{open}\rm
	Let $u\in L^1(X)$. Let $\{\delta_n\}_{n\in\N}\subset\R^+$ with $\delta_n\to 0$. Assume that
	$$
	\liminf_{ \delta_n \to 0}   \mathop{\int_X\int_X}_{\{|u(x)-u(y)|>\delta_n\}} \frac{\delta_n}{\rho(x,y) d (x,y)}  \, d \mu(x) \, d \mu(y)<+\infty. 
	$$
	Then $u\in BV(X)$ and there exists a positive constant $C$ such that
	$$
	\liminf_{ \delta_n \to 0}   \mathop{\int_X\int_X}_{\{|u(x)-u(y)|>\delta_n\}} \frac{\delta_n}{\rho(x,y) d(x,y)}  \, d \mu(x) \, d \mu(y)   \geq C \Ch_1(X)
	$$
	This rather subtle assertion was proved in the Euclidean case in \cite{p-uno} (see also \cite{BrNgu}).
\end{open} 

\section{Preliminaries}

In this section we will introduce the well established theory of Sobolev spaces in metric measure spaces, as well as some technical results that will be needed in the proofs.

\subsection{Sobolev spaces in metric measure spaces}\label{sec:sob}

Several equivalent definition of $W^{1,p}(X,\mu,d)$ and $BV$ are available in the literature: we refer to \cite{AGS, ADM, DMPhD, BB, S, libroS} as general references. We will use the definition of Sobolev spaces given in \cite{AGS} (and in \cite{ADM} for BV spaces), where it is also proved to be equivalent to the more common definition of newtonian spaces $N^{1,p}$, defined for example in \cite{S}. In the sequel $p$ will be the Sobolev exponent and $q$ is its dual exponent, namely $1/p +1/q=1$.

We will denote by $AC([0,1];X)$ the space of absolutely continuous curves $\gamma: [0,1] \to X$, for which it is defined the metric derivative $|\gamma'|$ almost everywhere. Moreover we set $e_t :AC([0,1];X) \to X$ as the evaluation of $\gamma$ at time $t$, namely $e_t(\gamma)=\gamma(t)$. Another useful definition is that of push forward: given a Borel function $f:X \to Y$ and a measure $\mu$ on $X$ we define  $\nu = f_{\sharp} \mu$ as the measure on $Y$ such that $\nu(A)=\mu (f^{-1}(A))$.    

A key useful concept for Sobolev Spaces is the upper gradient.
%
%

\begin{definition}[Upper gradients] 
Let $f:X \to \mathbb{R}$ and $g:X \to [0, \infty]$. We say that $g$ is an \emph{upper gradient} for $f$ if for every curve $\gamma \in AC([0,1];X)$ we have the so called upper gradient inequality
\begin{equation}\label{eqn:ugineq} | f( \gamma(1))- f(\gamma(0))| \leq \int_0^1 g(\gamma(t))| \gamma|'(t) \, d t. \end{equation}
We will often substitute the right hand side with the shorter notation $\int_{\gamma} g$. Moreover we say that $g$ is an upper gradient of $f$ up to  scale $\delta$ if  \eqref{eqn:ugineq} is satisfied for every $\gamma$ such that $\ell(\gamma) > \delta$.
\end{definition}

We will need one more class object in order to define the Sobolev Spaces: the $p$-plans.

\begin{definition}[$p$-plans] 
	Let $\pi$ be a probability measure on $C([0,1];X)$. We say $\pi$ is a $p$-plan if
\begin{itemize}
\item there exists $C>0$ such that $(e_t)_{\sharp}\pi \leq C \mu$ for every $0 \leq t \leq 1$;
\item there exists $b_{\pi} \in L^q(X,\mu)$,  called barycenter of $\pi$, such that
$$ \int_{AC} \left( \int_{\gamma} g \right) \, d \pi = \int_X g \cdot b_{\pi} \, d \mu, \qquad \forall g \in C_b(X,d).$$
\end{itemize}
We will say that a property on $AC$ is true for $p$-almost every curve if it is true for $\pi$-almost every curve, for every $p$-plan $\pi$. Conversely a set of curves $\Gamma$ is said to be $p$-null or $p$-negligible if $\pi(\Gamma)=0$ for every $p$-plan $\pi$.
\end{definition}

With this notion of $p$-almost every curve, we can relax the notion of upper gradient, and with this relaxed notion we can define the Sobolev Space.

\begin{definition}[$p$-weak upper gradient]\label{def:wug} A function $g \in L^p(X,\mu)$ is a $p$-weak upper gradient for $f \in L^p(X, \mu)$ if for $p$-almost every curve $\gamma$ we have that $f \circ \gamma$ is $W^{1,1}$ and moreover 
	$$
	\left|\frac d{dt} f \circ \gamma (t) \right| \leq g ( \gamma(t)) | \gamma|' (t),
	$$ 
	for almost every $t \in [0,1].$
\end{definition}

\begin{definition}[Sobolev space] \label{def:sob} Let $p\geq 1$. A function $f \in L^p(X, \mu)$ belongs to $W^{1,p}(X,d , \mu)$  if equivalently
\begin{itemize}
\item[(a)]  $f$ has a $p$-weak upper gradient; then there exists a minimal weak upper gradient  (in the $\mu$-a.e. sense), denoted by $|\nabla f|_w$. 
\item[(b)] (only if $p>1$) there exists a constant $C$ such that for every $p$-plan $\pi$ we have
$$ \int_{AC} | f(\gamma(0)) - f(\gamma(1))| \, d \pi \leq \| b_{\pi}\|_q \cdot C^{1/p}.$$
\item[(c)] there exists $ g \in L^p(X, \mu)$ such that for every $p$-plan $\pi$ we have
$$ \int_{AC} | f(\gamma(0)) - f(\gamma(1))| \, d \pi \leq \int_X g \cdot b_{\pi} \, d \mu.$$
\end{itemize}
Moreover the least constant $C$ in (b) is equal to $\int_X | \nabla f|_w^p \, d \mu$ and the minimal $g$ that satisfies $(c)$ is again $| \nabla f |_w$.
\end{definition} 

\begin{definition}[BV space] \label{def:bv} A function $f \in L^1(X, \mu)$ belongs to $BV(X,d , \mu)$ if equivalently
\begin{itemize}
\item[(a)] $f\circ \gamma$ is $BV$ for $p$-almost every curve and there exists a measure $\nu$ such that for every $p$-plan $\pi$ we have
$$\int_{AC} \gamma_{\sharp} |D (f \circ \gamma) | ( A)  \, d \pi \leq \| b_{\pi}\|_{\infty} \cdot \nu(A) \qquad \forall A \subseteq X \text{ open set}.$$
\item[(b)] there exists a constant $C$ such that for every $1$-plan $\pi$ we have
$$ \int_{AC} | f(\gamma(0)) - f(\gamma(1))| \, d \pi \leq \| b_{\pi} \|_{\infty} \cdot C.$$
\item[(c)] there exists a finite measure  $\nu$ such that for every $1$-plan $\pi$ we have
$$ \int_{AC} | f(\gamma(0)) - f(\gamma(1))| \, d \pi \leq \int_X b^*_{\pi} \, d \nu,$$
where $b^*_{\pi}$ denotes the upper semicontinuous relaxation of $b_{\pi}$.
\end{itemize}
Moreover the minimal $\nu$ in either (a) or (c) is denoted by $|Df|$ and the least constant $C$ in (b) is equal to $|Df|(X)$.
\end{definition} 

In the following we will denote 
\begin{equation}
\label{charge}
\Ch_p ( f ):=  \int_X |\nabla f|_w^p \, d \mu,\quad \text{for $p>1$},\qquad 
\Ch_1(f):=|Df|(X).
\end{equation}
For the next lemma in the case $p>1$ we refer the reader to\cite{Ambr1}.

\begin{lemma}[Semicontinuity]
	\label{lem:delta0} 
	Let $p \geq 1$ and let $f_n,g_n \in L^p_{{\rm loc}}(X,\mu)$ be functions such that $g_n$ is an upper gradient up to scale $\delta_n$ of $f_n$. Suppose that $\delta_n \downarrow 0$, $f_n \to f$ in $L^p(X,\mu)$ and $g_n \rightharpoonup g$ weakly in $L^p_{{\rm loc}}(X,\mu)$ (respectively in the sense of measure). Then $g$ is a $p$-weak upper 
	gradient for $f$ (respectively we have $|Df| \leq g$). In particular we have also
$$   
\liminf_{n \to \infty} \int_{X} g_n^p \, d \mu \geq \sup_{R >0} \liminf_{n \to \infty} \int_{B(x_0, R)} g_n^p \, d \mu \geq \Ch_p(f)
$$
\end{lemma}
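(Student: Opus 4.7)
The plan is to verify characterization~(c) in Definition~\ref{def:sob} when $p>1$ (respectively Definition~\ref{def:bv} when $p=1$), since this immediately yields both that $g$ is a $p$-weak upper gradient (resp.\ that $|Df|\leq g$) and, together with standard lower semicontinuity, the final Cheeger-energy bound. So fix a $p$-plan (resp.\ $1$-plan) $\pi$. Because the weak convergence of $g_n$ is only local, fix also a ball $B_R=B(x_0,R)$ and restrict $\pi$ to the curves staying inside $B_R$, obtaining $\pi_R\leq\pi$: its barycenter $b_{\pi_R}\leq b_\pi$ is supported in $B_R$, and $\pi_R\uparrow\pi$ as $R\to\infty$, so it suffices to check the required inequality on each $\pi_R$.

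Decompose $\pi_R$ into the \emph{long} curves ($\ell(\gamma)>\delta_n$) and the \emph{short} curves ($\ell(\gamma)\leq\delta_n$). On the long ones the hypothesis applies, giving $|f_n(\gamma(0))-f_n(\gamma(1))|\leq\int_\gamma g_n$, and the barycenter property yields
$$
\int_{\{\ell(\gamma)>\delta_n\}}\bigl|f_n(\gamma(0))-f_n(\gamma(1))\bigr|\,d\pi_R \leq \int_{B_R} g_n\,b_{\pi_R}\,d\mu.
$$
Now pass to the limit $n\to\infty$. For the left-hand side, the bound $(e_i)_\sharp\pi_R\leq C\mu$ and the finiteness of $\mu(B_R)$ upgrade the $L^p$ convergence $f_n\to f$ to $L^1(\pi_R)$ convergence of $f_n\circ e_i$ to $f\circ e_i$, so the integrand converges in $L^1(\pi_R)$. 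Combined with $\{\ell(\gamma)>\delta_n\}\uparrow\{\ell(\gamma)>0\}$ and the trivial identity $f(\gamma(0))=f(\gamma(1))$ on constant curves, this gives
$$
\lim_{n\to\infty}\int_{\{\ell(\gamma)>\delta_n\}}\bigl|f_n(\gamma(0))-f_n(\gamma(1))\bigr|\,d\pi_R=\int_{AC}\bigl|f(\gamma(0))-f(\gamma(1))\bigr|\,d\pi_R.
$$
For the right-hand side, when $p>1$ we have $b_{\pi_R}\in L^q(B_R)$ and weak convergence of $g_n$ in $L^p(B_R)$ yields directly $\int_{B_R} g_n b_{\pi_R}\,d\mu\to\int_{B_R} g\,b_{\pi_R}\,d\mu$; when $p=1$ we use the upper semicontinuous representative $b_{\pi_R}^*$, which is bounded and compactly supported, together with the standard inequality $\limsup\int h\,d\mu_n\leq\int h\,d\mu$ valid for $h$ u.s.c.\ bounded compactly supported, to get $\limsup_n\int_{B_R} g_n b_{\pi_R}\,d\mu\leq\int_{B_R} b_{\pi_R}^*\,dg$. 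Letting $R\to\infty$ then gives characterization~(c) for $\pi$, hence the first part of the statement.

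The \emph{in particular} assertion is now immediate: weak lower semicontinuity of the $L^p$-norm on each $B_R$ gives $\int_{B_R} g^p\,d\mu\leq\liminf_n\int_{B_R} g_n^p\,d\mu\leq\liminf_n\int_X g_n^p\,d\mu$, and taking the supremum over $R$ together with the fact that $g$ is a $p$-weak upper gradient of $f$ yields $\Ch_p(f)\leq\int_X g^p\,d\mu\leq\sup_R\liminf_n\int_{B_R} g_n^p\,d\mu$. The main delicacy in the argument is coordinating the two limit passages: weak $L^p_{\loc}$ convergence is far too soft to control $|f_n(\gamma(0))-f_n(\gamma(1))|$ on curves of length $\leq\delta_n$, where the hypothesis on $g_n$ gives no information; it is only the collapse $\{\ell(\gamma)\leq\delta_n\}\downarrow\{\ell(\gamma)=0\}$, forced by $\delta_n\downarrow 0$, that makes the short-curve contribution disappear essentially for free.
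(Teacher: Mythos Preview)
Your proof is correct and follows essentially the same route as the paper: both verify characterization~(c) of Definition~\ref{def:sob}/\ref{def:bv} by restricting the plan $\pi$ to curves contained in a large ball, applying the upper-gradient-at-scale-$\delta_n$ inequality on sufficiently long curves, and passing to the limit using strong $L^p$ convergence of $f_n$ and weak convergence of $g_n$. The only organizational difference is that the paper ties the localization parameters together by restricting to $\mathcal{B}_n=\{\ell(\gamma)=0\}\cup\big(\{\ell(\gamma)\geq 1/n\}\cap\{\gamma\subset B(x_0,n)\}\big)$ and then sends $m\to\infty$ followed by $n\to\infty$, whereas you keep the ball radius $R$ and the scale cutoff $\delta_n$ separate and send $n\to\infty$ first, then $R\to\infty$; this is a cosmetic reordering of the same argument.
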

\begin{proof}
For every $M >0$, let us denote by $\mathcal{A}_M \subseteq AC([0,1];X)$ the set
 $$\mathcal{A}_M:=\{ \ell(\gamma) \geq \tfrac 1M \} \cap \{ \gamma([0,1]) \subseteq B_M(x_0)\}.$$
If $\ell(\gamma)>0$ we have that $\gamma \in \mathcal{A}_M$ for $M= \sup \left\{ \ell(\gamma)^{-1} , d(\gamma(0),x)+\ell(\gamma)\right\}$, so, in particular 
$$ 
AC([0,1];X)= \{ \ell(\gamma)=0\} \cup \bigcup_{n=1}^{\infty} \mathcal{A}_n. 
$$
We can now define $\mathcal{B}_n=\{ \ell(\gamma)=0\} \cup  \mathcal{A}_n$. Let us consider $\pi_n : = \pi|_{\mathcal{B}_n}$ and compute
$$ 
\int_{AC} | f(\gamma_1) -f(\gamma_0)| \,d \pi_n \leq \int_{AC} |f_m(\gamma_0) - f_m(\gamma_1)| \, d \pi_n + 2C \int_{B(x_0,n)} | f_m -f|\, d \mu, 
$$
where we used the triangular inequality and the first property of $p$-plans. Then we take $m$ big enough such that $\delta_m \leq \tfrac 1n$ and in this way we can use the upper gradient property $\pi_n$-almost everywhere (notice also that if $\ell(\gamma)=0$ the upper gradient property is trivial) to get

\begin{align*} \int_{AC}  | f(\gamma_1) -f(\gamma_0)| \,d \pi_n &\leq \int_{AC}\left( \int_{\gamma} g_m \right) \, d \pi_n + 2C \int_{B(x_0,n)} | f_m -f|\, d \mu  \\ &\leq \int_{B(x_0,n)} g_m \cdot b_{\pi} \, d \mu + 2C \int_{B(x_0,n)} | f_m -f|\, d \mu
\end{align*}
Taking the limit as $m \to \infty$ (using $b_{\pi} \in L^q$ and the weak convergence of $g_m$ to $g$), and then taking $n \to \infty$ we get precisely Definition \ref{def:sob} (respectively \ref{def:bv}) (c), and so we can conclude.
\end{proof}

\subsection{Preliminaries on doubling spaces equipped with Poincar\'e inequality}

Let us define a regularization operator $M_t : L^p(X) \to L^p(X)$
$$ M_tf (x)= \frac 1{\mu(B(x,t))} \int_{B(x,t)} f(y) \, d \mu(y). $$
We state its main properties

\begin{lemma}[Boundedness of $M_t$]
	\label{lem:mtconv} 
	Let $\mu$ be a doubling measure with doubling constant $c_D$. Then $M_t$ is a linear bounded operator from $L^p(X)$ to itself, in particular
	$$
	\| M_t f\|_p \leq c_D \| f\|_p,\quad \text{for every $f \in L^p(X)$.}
	$$ 
	Moreover we have $\|M_t f -f\|_p \to 0$ as $t \to 0$ for every $f \in L^p(X)$.
\end{lemma}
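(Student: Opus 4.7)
The plan is to split the statement into two parts: the operator-norm bound, which rests on a Jensen--Fubini swap together with the doubling comparison of ball measures; and the strong continuity $M_t \to \mathrm{Id}$, which follows by the usual density-and-uniform-bound argument.

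For the bound, I will apply Jensen's inequality pointwise to obtain
$$|M_t f(x)|^p \leq \frac{1}{\mu(B(x,t))}\int_{B(x,t)} |f(y)|^p\, d\mu(y),$$
integrate in $x$, and swap the order via Fubini, noting that $x \in B(y,t)$ iff $y \in B(x,t)$. The key observation is the symmetric-looking doubling estimate: if $d(x,y)<t$ then $B(y,t)\subseteq B(x,2t)$, so $\mu(B(y,t)) \leq c_D\, \mu(B(x,t))$, i.e.\ $1/\mu(B(x,t)) \leq c_D/\mu(B(y,t))$. Plugging this in,
$$\int_{B(y,t)} \frac{1}{\mu(B(x,t))}\, d\mu(x) \leq \frac{c_D}{\mu(B(y,t))}\cdot \mu(B(y,t)) = c_D,$$
yielding $\|M_tf\|_p^p \leq c_D\|f\|_p^p$, i.e.\ $\|M_tf\|_p \leq c_D^{1/p}\|f\|_p \leq c_D\|f\|_p$ since $c_D\geq 1$. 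Linearity is automatic from the definition.

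For the convergence, my strategy is the standard $3\varepsilon$-argument. First I will verify $\|M_tf - f\|_p \to 0$ on a dense subclass; bounded Lipschitz functions with bounded support are convenient, since if $f$ is $L$-Lipschitz then $|M_tf(x)-f(x)| \leq Lt$ uniformly in $x$, and the supports of $M_tf$ stay inside a fixed bounded enlargement of $\mathrm{supp}(f)$ for $t\leq 1$; dominated convergence (or a direct estimate of the measure of this enlargement, using doubling) then gives $L^p$ convergence. Given an arbitrary $f \in L^p(X)$ and $\varepsilon>0$, I take a Lipschitz $g$ with bounded support such that $\|f-g\|_p < \varepsilon$ and estimate
$$\|M_tf - f\|_p \leq \|M_t(f-g)\|_p + \|M_tg - g\|_p + \|g-f\|_p \leq (c_D+1)\varepsilon + \|M_tg-g\|_p,$$
which is $\leq (c_D+2)\varepsilon$ for $t$ small enough.

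I do not expect a serious obstacle here; both pieces are routine. The only point that deserves care is making the $\mathrm{supp}(M_tg)$ argument rigorous, which requires a uniform-in-small-$t$ bound on the measure of a $t$-neighbourhood of $\mathrm{supp}(g)$: this is where doubling is used again (a compact set can be covered by finitely many balls, and doubling controls the measure of their $t$-enlargements). Density of bounded Lipschitz functions with bounded support in $L^p(X,\mu)$ is standard for Borel $\sigma$-finite measures on metric spaces and will be invoked without further comment.
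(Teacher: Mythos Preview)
Your proof is correct and follows essentially the same route as the paper: Jensen's inequality followed by Fubini and the doubling comparison $\mu(B(y,t))\leq c_D\,\mu(B(x,t))$ for $d(x,y)<t$ to get $\|M_tf\|_p^p\leq c_D\|f\|_p^p$, then the density-plus-uniform-bound argument for the convergence (the paper simply declares the Lipschitz case ``obvious'' where you spell out the support and $3\varepsilon$ details).
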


\begin{proof} For the first part we use first Jensen inequality
$$ \int_X  \left (  \frac 1{\mu(B(x,t))} \int_{B(x,t)} f \, d \mu \right)^p \, d \mu \leq \int_X   \frac 1{\mu(B(x,t))} \int_{B(x,t)} f^p \, d \mu \, d \mu, $$
and then Fubini to obtain

$$ \int_X   \frac 1{\mu(B(x,t))} \int_{B(x,t)} f^p \, d \mu \, d \mu = \int_X f^p(y) g_t(y) \, d \mu, $$
where $g_t(y)= \int_{B(y,t)} \frac 1{\mu(B(x,t))} \, d \mu$. Using the doubling property we get
$$ g_t (y) \leq \int_{B(y,t)} \frac {c_D}{\mu(B(x,2t)} \, d \mu \leq \int_{B(y,t)} \frac {c_D}{\mu(B(y,t)} \, d \mu = c_D.$$
The convergence of $M_tf$ to $f$ is obvious for Lipschitz functions with bounded support and then we conclude using the boundedness of $M_t$ and the density of Lipschitz functions in $L^p(X)$.
\end{proof}

\begin{lemma}\label{lem:key} 
	If  $\mu$ is doubling, there exist $C>0$ such that for every $x \in X$, $r>0$, we have
$$ \int_{ \{d(x,y) \geq r\}} \frac 1{ \rho(x,y) d(x,y)^p } \, d \mu (y) \leq \frac C{r^p}. $$
\end{lemma}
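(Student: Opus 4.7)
The plan is to decompose the integration region into dyadic annuli centered at $x$ and estimate each annular contribution using the two defining properties: the lower bound on $\rho$ in terms of ball measures, and the doubling condition on $\mu$.

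More precisely, I would set $A_k := \{y \in X : 2^k r \leq d(x,y) < 2^{k+1}r\}$ for $k = 0, 1, 2, \ldots$, so that $\{d(x,y) \geq r\} = \bigcup_{k \geq 0} A_k$ (up to a negligible set). On each annulus $A_k$, two estimates are available: the distance bound $d(x,y)^p \geq (2^k r)^p$, and the kernel bound $\rho(x,y) \geq C_\rho^{-1} \mu(B(x, d(x,y))) \geq C_\rho^{-1} \mu(B(x, 2^k r))$, which follows from the definition of doubling kernel together with monotonicity of $r \mapsto \mu(B(x,r))$. Combined, these yield
$$\frac{1}{\rho(x,y) d(x,y)^p} \leq \frac{C_\rho}{\mu(B(x, 2^k r))\, (2^k r)^p} \quad \text{on } A_k.$$

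Next, I would control the measure of $A_k$ by iterating the doubling inequality: since $A_k \subseteq B(x, 2^{k+1}r)$, the doubling property gives $\mu(A_k) \leq c_D \,\mu(B(x, 2^k r))$. Multiplying through and integrating over $A_k$ produces
$$\int_{A_k} \frac{1}{\rho(x,y) d(x,y)^p}\, d\mu(y) \leq \frac{C_\rho\, c_D}{(2^k r)^p} = \frac{C_\rho\, c_D}{r^p}\, 2^{-kp}.$$

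Finally I would sum over $k \geq 0$: since $p \geq 1$, the geometric series $\sum_{k \geq 0} 2^{-kp}$ converges to $(1 - 2^{-p})^{-1}$, yielding
$$\int_{\{d(x,y) \geq r\}} \frac{1}{\rho(x,y) d(x,y)^p}\, d\mu(y) \leq \frac{C_\rho\, c_D}{r^p}\cdot \frac{1}{1-2^{-p}} =: \frac{C}{r^p}.$$
There is no real obstacle here; the only subtle point is remembering to use the \emph{lower} bound in the doubling kernel definition rather than the upper one, and noting that the constant $C$ can be chosen to depend only on $c_D$, $C_\rho$, and $p$, as required.
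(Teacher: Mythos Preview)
Your proof is correct and follows essentially the same dyadic-annulus decomposition as the paper: both split $\{d(x,y)\geq r\}$ into annuli $A_k=\{2^k r\leq d(x,y)<2^{k+1}r\}$, bound $d(x,y)^{-p}$ and $\rho(x,y)^{-1}$ from above on each annulus using the doubling-kernel lower bound and monotonicity, control $\mu(A_k)$ by $c_D\,\mu(B(x,2^k r))$ via doubling, and sum the resulting geometric series. There is no substantive difference between the two arguments.
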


\begin{proof} We consider the annuli $A_i(x)= \{ 2^ir \leq d(x,y) < 2^{i+1} r\}$. Now, whenever $y \in A_i$ we have $d(x,y) \geq 2^{i+1}r$, but also $\rho(x,y) \geq \tfrac 1C \mu(B(x,2^ir))$, since $\mu$ is doubling and $\rho(x,y)$ is comparable to $\mu(B(x,d(x,y)))$. We thus estimate
$$
\int_{\{ d(x,y) \geq r\}} \frac 1{ \rho(x,y) d(x,y)^p } \, d \mu (y)  = \sum_{i=0}^{\infty} \int_{A_i} \frac 1{ \rho(x,y) d(x,y)^p } \, d \mu (y) \leq \sum_{i=0}^{\infty} C \frac{ \mu( A_i )}{ \mu(B(x, 2^i r)) r^p 2^{ip+p}}.
$$
In the end we use $\mu(A_i) \leq \mu(B(x,2^{i+1}r))$ and then the doubling condition again to get
$$ 
\int_{ \{d(x,y) \geq r\}} \frac 1{ \rho(x,y) d(x,y)^p } \, d \mu (y)  \leq \frac{ C}{r^p} \frac 1{2^p-1} \leq  \frac{ C}{r^p},
$$
which concludes the proof.
\end{proof}

In the spirit of the Hajlasz-Sobolev space we then state the following

\begin{proposition}\label{prop:HK} Let $p>1$, $\mu$ be a doubling measure that satisfies a $(1,p)$-Poincar\'e inequality. Then for every $r>0$ there exists a constant $C_r$ such that for every $u \in W^{1,p}(X,d,\mu)$  there exists $g \in L^p$ such that $\|g \|_p^p \leq C_r \cdot \Ch_p (X)$ and
$$ |u(x)-u(y)| \leq d(x,y) (g(x) + g(y)) \qquad \forall x,y \in X, \; d(x,y) \leq r. $$
\end{proposition}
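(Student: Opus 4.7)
The plan is to either quote the global Hajlasz--Sobolev characterization of $W^{1,p}$ already recalled at the beginning of the paper (which, under doubling plus $(1,p)$-Poincar\'e, produces a global $g \in L^p$ with $\|g\|_p^p \leq C\,\Ch_p(u)$ satisfying the Hajlasz inequality for $\mu \otimes \mu$-almost every pair $(x,y)$, trivially yielding the local version for $d(x,y) \leq r$), or to prove it directly via the classical Hajlasz--Koskela telescoping scheme. I would present the latter, since it is constructive and keeps track of how $C_r$ depends on the structural constants $p, c_D, c_P$.

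Fix $u \in W^{1,p}(X, d, \mu)$ and set $g_0 := |\nabla u|_w \in L^p(X,\mu)$. For two Lebesgue points $x, y$ of $u$ with $d := d(x,y) \leq r$ consider the two dyadic chains $B_k^x := B(x, 2^{1-k}d)$ and $B_k^y := B(y, 2^{1-k}d)$, together with the bridge ball $B^\ast := B(x, 3d)$. Doubling gives $\mu(B_k^x) \leq c_D\, \mu(B_{k+1}^x)$, so $B_{k+1}^x \subseteq B_k^x$ together with the $(1,p)$-Poincar\'e inequality on $B_k^x$ and H\"older's inequality yields
$$
|u_{B_{k+1}^x} - u_{B_k^x}| \leq \frac{c_D}{\mu(B_k^x)} \int_{B_k^x} |u - u_{B_k^x}|\, d\mu \leq C\, 2^{-k} d \left(\frac{1}{\mu(B_k^x)}\int_{B_k^x} g_0^p\, d\mu\right)^{\!1/p}\!.
$$
Summing the telescope and the geometric series $\sum 2^{-k}$ gives $|u(x) - u_{B_0^x}| \leq C\, d\, (M_{2r} g_0^p(x))^{1/p}$, where $M_R h(x) := \sup_{0 < s \leq R} \frac{1}{\mu(B(x,s))}\int_{B(x,s)} |h|\, d\mu$ denotes the truncated maximal operator. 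The analogous estimate at $y$ and the bridge term $|u_{B_0^x} - u_{B_0^y}|$, handled by comparing both averages to $u_{B^\ast}$ (since $B_0^x, B_0^y \subseteq B^\ast$ with $\mu(B^\ast) \leq C \min\{\mu(B_0^x), \mu(B_0^y)\}$ by doubling, and applying Poincar\'e on $B^\ast$), altogether yield, for $d(x,y) \leq r$,
$$
|u(x) - u(y)| \leq C\, d(x,y)\, \bigl[(M_{3r} g_0^p(x))^{1/p} + (M_{3r} g_0^p(y))^{1/p}\bigr].
$$

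The main obstacle is then the $L^p$-integrability of the candidate Hajlasz gradient: the naive choice $g := C(M_{3r} g_0^p)^{1/p}$ does \emph{not} in general belong to $L^p(X,\mu)$, because the maximal operator on doubling spaces is only of weak type $(1,1)$ and $g_0^p$ lies only in $L^1$. The standard way out --- and, implicitly, the reason for the citation of Keith--Zhong \cite{KZ} at the beginning of the paper --- is the self-improvement result: under doubling, a $(1,p)$-Poincar\'e inequality with $p>1$ upgrades to a $(1, p-\varepsilon)$-Poincar\'e inequality for some $\varepsilon = \varepsilon(p, c_D, c_P) > 0$. Redoing the telescoping with the improved exponent replaces $g_0^p$ by $g_0^{p-\varepsilon}$ inside the maximal function, and the final choice
$$
g(x) := C\bigl(M_{3r} g_0^{p-\varepsilon}(x)\bigr)^{1/(p-\varepsilon)}
$$
does lie in $L^p$, since $p/(p-\varepsilon) > 1$ places us in the strong-type range where the maximal operator on doubling spaces is bounded on $L^{p/(p-\varepsilon)}$. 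This gives $\|g\|_p^p \leq C_r\, \|g_0\|_p^p = C_r\,\Ch_p(u)$, which is exactly the claim. The hypothesis $p>1$ enters precisely here, both through Keith--Zhong and through the strong boundedness of the maximal operator.
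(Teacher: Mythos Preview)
Your proof is correct and is precisely an expanded version of the paper's own argument: the paper's proof consists of the single sentence ``It is sufficient to combine the results from \cite{KMac} and \cite{KZ}, along with the boundedness of the maximal function operator in doubling spaces,'' and your telescoping chain is the Koskela--MacManus step \cite{KMac}, your self-improvement is the Keith--Zhong step \cite{KZ}, and your final $L^{p/(p-\varepsilon)}$-bound is the maximal function step. You have simply unpacked what the paper leaves as citations.
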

\begin{proof}
It is sufficient to combine the results from \cite{KMac} and \cite{KZ}, along with the boundedness of the maximal function operator in doubling spaces.
\end{proof}

\section{Proof of Theorem \ref{main}}

\noindent
We prove separately the upper and the lower bound.

\subsection{Upper bound of (doubling) Theorem \ref{main}}
For every ball $B=B(x',t)$, denoting by 
$$
u_B := \frac 1{\mu(B)} \int_B u \, d \mu,
$$ 
we have
\begin{align}\label{eqn:compamean}
\mu(B) \int_B | u_B - u(x)|^p \, d \mu(x) & \leq  \int_B \int_B | u(x) -u(y)|^p \, d \mu(x) \, d \mu (y) \\
& \leq 2^{p} \mu(B) \int_B  | u_B - u(x)|^p \, d \mu(x).  \notag
\end{align}
The first inequality follows by H\"older inequality, while the second one follows from 
the elementary inequality $|a+b|^p \leq 2^{p-1} |a|^p +|b|^p$ applied with $a=u(x)-u_B$ and $b=u_B-u(y)$. 
We now write 
$$
\frac 1{d(x,y)^{ps}}= ps \int_{d(x,y)}^{\infty} \frac 1 {t^{ps+1}} \, dt.
$$ 
Then we apply the Fubini-Tonelli Theorem and get in turn
\begin{align}\label{eqn:fubini1}
 \int_{X \times X} \frac {|u(x)-u(y)|^p}{\rho(x,y) d(x,y)^{ps}} \, d \mu(x) \, d \mu(y) & =ps \int_X \int_X \int_{d(x,y)}^{\infty} \frac{ |u(x) - u(y)|^p}{\rho(x,y) t^{ps+1}} \,dt \, d \mu(x) \, d \mu(y) \\
 &= ps \int_0^{\infty} \frac 1{t^{ps+1}} 
\iint_{\{d(x,y) \leq t\}} \frac{|u(x)-u(y)|^p}{\rho(x,y)} \,d\mu(x) \, d \mu(y). \notag
\end{align}
Now, let us define the quantities
\begin{align*}
{\mathcal K}_t &:=  \iint_{\{d(x,y) \leq t\}} \frac{|u(x)-u(y)|^p}{\rho(x,y)} \,d\mu(x) \, d \mu(y), \\
{\mathcal H}_t &:=  \iint_{\{d(x,y) \leq t\}} \frac{|u(x)-u(y)|^p}{\sqrt{ \mu(B(x,t)) \mu(B(y,t))}} \,d\mu(x) \, d \mu(y), \\
{\mathcal S}_t &:= \int \frac 1{ \mu(B(x',t))^2}\iint_{B(x',t) \times B(x',t)} |u(x)-u(y)|^p \, d \mu(x) \, d \mu (y) \, d \mu (x').
\end{align*}

We will prove a lemma that deals with relations between these quantities, and then an estimate from above of $\mathcal{S}_t$.

\begin{lemma}\label{lem:HKS} There exist $0< c < C < \infty$ depending only on the doubling constant such that for every $t>0$ we have
\begin{itemize}
\item[(i)] ${\mathcal H}_t \leq {\mathcal K}_t \leq C \sum_{k=0}^{\infty} \mathcal{H}_{ t/2^k}$;
\item[(ii)] $c {\mathcal H}_{t/2} \leq {\mathcal S}_t \leq C {\mathcal H}_{2t}$;
\item[(iii)] $\mathcal{S}_t \leq Ct^p \Ch_p(u)$.
\item[(iv)] if $t\geq 1$ then $\mathcal{K}_t \leq \mathcal{K}_1+ C\log_2 (2t)\int_X u^p \, d \mu$
\end{itemize}

\end{lemma}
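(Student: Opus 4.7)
The plan is to handle the four items separately, in each case using the comparison $\rho(x,y)\simeq\mu(B(x,d(x,y)))$ together with the doubling property to translate between different radii. For item (i), when $d(x,y)\leq t$ the doubling condition yields $\mu(B(x,t))\simeq\mu(B(y,t))$ and both dominate $\mu(B(x,d(x,y)))\simeq\rho(x,y)$, giving $\mathcal{H}_t\leq C\mathcal{K}_t$ (with the constant absorbed). For the reverse direction I decompose the domain into the dyadic annuli $A_k:=\{t/2^{k+1}<d(x,y)\leq t/2^k\}$, $k\geq 0$; on $A_k$ one has $\rho(x,y)\gtrsim\mu(B(x,t/2^k))\simeq\sqrt{\mu(B(x,t/2^k))\mu(B(y,t/2^k))}$, so the contribution of each $A_k$ to $\mathcal{K}_t$ is bounded by $C\mathcal{H}_{t/2^k}$, and summing gives the upper bound.

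For item (ii), I apply Fubini to write
$$\mathcal{S}_t=\iint_{\{d(x,y)<2t\}}|u(x)-u(y)|^p\,W_t(x,y)\,d\mu(x)\,d\mu(y),\qquad W_t(x,y):=\int_{B(x,t)\cap B(y,t)}\frac{d\mu(x')}{\mu(B(x',t))^2}.$$
For the upper bound, when $x'\in B(x,t)\cap B(y,t)$ doubling gives $\mu(B(x',t))\simeq\mu(B(x,t))\simeq\mu(B(y,t))$, while $\mu(B(x,t)\cap B(y,t))\leq\min\{\mu(B(x,t)),\mu(B(y,t))\}$; this shows $W_t(x,y)\lesssim 1/\sqrt{\mu(B(x,2t))\mu(B(y,2t))}$, and hence $\mathcal{S}_t\lesssim\mathcal{H}_{2t}$. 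For the lower bound I restrict to $\{d(x,y)\leq t/2\}$, where $B(y,t/2)\subseteq B(x,t)\cap B(y,t)$ and $\mu(B(x',t))\lesssim\mu(B(x,t/2))$ for every $x'\in B(x,t)$; this yields $W_t(x,y)\gtrsim 1/\sqrt{\mu(B(x,t/2))\mu(B(y,t/2))}$ on the restricted set, and therefore $\mathcal{S}_t\gtrsim\mathcal{H}_{t/2}$.

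For item (iii), I combine \eqref{eqn:compamean} with the $(1,p)$-Poincar\'e inequality (applied with $g=|\nabla u|_w$, or with $|Du|(B)$ in the BV case) on each ball $B(x',t)$ to bound the inner double integral of $\mathcal{S}_t$ by $2^p\,t^p\,c_P\,\mu(B(x',t))^{-1}\int_{B(x',t)}g^p\,d\mu$. Exchanging the order of integration in $x'$ and $y$ and using the doubling estimate $\int_{B(y,t)}\mu(B(x',t))^{-1}\,d\mu(x')\leq c_D$ then produces $\mathcal{S}_t\leq C\,t^p\,\Ch_p(u)$. For item (iv), I decompose $\{d(x,y)\leq t\}$ into $\{d(x,y)\leq 1\}$ (which contributes exactly $\mathcal{K}_1$) and the $O(\log_2 t)$ dyadic shells $\{2^k<d(x,y)\leq 2^{k+1}\}$; on each shell, $|u(x)-u(y)|^p\leq 2^{p-1}(|u(x)|^p+|u(y)|^p)$ reduces the integrand, and the same dyadic computation as in Lemma \ref{lem:key} bounds $\int\rho(x,y)^{-1}\,d\mu(y)$ on the shell by a universal constant, so summing over the shells yields the claimed logarithmic bound.

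The main technical obstacle will be in (ii): the radius $t$ inside $\mathcal{S}_t$ does not quite match the radii $t/2$ and $2t$ appearing on the two sides of the inequality, so doubling must be invoked repeatedly and in opposite directions (to enlarge balls for the lower bound and to shrink them for the upper bound) in order to absorb the discrepancy into a single constant; one must also be careful that $W_t(x,y)$ is controlled from below only on the restricted set $\{d(x,y)\leq t/2\}$ actually used to define $\mathcal{H}_{t/2}$. The other three items are essentially direct combinations of Fubini's theorem, dyadic decomposition of the diagonal, and the doubling property already exploited in Lemma \ref{lem:key}.
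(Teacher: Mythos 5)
Your proposal is correct and follows essentially the same route as the paper's proof: dyadic annuli for both inequalities in (i) and for the shell estimate in (iv), Fubini in the center variable $x'$ with two-sided bounds on the kernel $W_t=f_t$ for (ii), and the combination of \eqref{eqn:compamean} with the Poincar\'e inequality plus a Fubini/doubling exchange for (iii). The only differences are cosmetic (e.g.\ your lower bound for $W_t$ is stated with radius $t/2$ rather than $t$ in the denominator, which is equivalent up to doubling constants).
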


Before proving Lemma \ref{lem:HKS} we use it to deduce the upper bound: first of all we have 
\begin{equation}\label{eqn:stimaK} \mathcal{K}_t \leq \frac Cc \sum_{k=0}^{\infty}\mathcal{S}_{2t/2^k} \leq \frac {C^2}c \Ch_p(X) \sum_{k=0}^{\infty} \left(  \frac {4t}{2^k} \right)^p \leq C t^p \Ch_p(X).\end{equation}
Now we can use \eqref{eqn:fubini1} in order to find 
$$ (1-s) \int_{X \times X} \frac {|u(x)-u(y)|^p}{\rho(x,y) d(x,y)^{ps}} \, d \mu(x) \, d \mu(y) = (1-s)ps \int_0^{\infty} \frac {\mathcal{K}_t}{t^{ps+1}} dt.$$
Splitting the last integral in $t\leq1$ and $t>1$ will let us conclude using \eqref{eqn:stimaK} in the first part and Lemma \ref{lem:HKS} (iv) for the second part:

\begin{align*}
 (1-s)ps \int_0^{\infty} \frac{\mathcal{K}_t}{t^{ps+1}} \, dt & \leq (1-s)psC \int_0^1 \frac {\Ch_p(X)}{t^{p(s-1)+1}} \, dt + (1-s)ps \|u\|_p^p C \int_1^{\infty} \frac{\log_2(2t)}{t^{ps+1}} \, dt \\
 & \leq Cs \cdot \Ch_p(X) + (1-s) C \| u\|_p^p \int_0^{\infty} (1+\tfrac {\tau}{ps})e^{-\tau} \, d\tau \\
 & =  Cs \cdot \Ch_p(X) + (1-s) C \| u\|_p^p  (1+\tfrac 1{ps}).
 \end{align*}

In particular, letting $ s \to 1$ we obtain the upper bound.

\begin{proof}[Proof of Lemma \ref{lem:HKS}] Every constant inside this proof will depend on $c_D, c_P$, and possibly $p$.

\begin{itemize} 
\item[(i)] The inequality ${\mathcal H}_t \leq {\mathcal K}_t $ is trivial. The other inequality comes from the fact that 
$$ {\mathcal K}_t = \sum_{k=0}^{\infty} \iint_{\{\frac t{2^{k+1}} \leq d(x,y) \leq \frac t{2^k} \}} \frac{|u(x)-u(y)|^p}{\rho(x,y)} \,d\mu(x) \, d \mu(y); $$
then in every term we have 
$$
\rho(x,y) \geq \sqrt{ \mu(B(x, \tfrac t{2^{k+1}} )) \mu(B(y,  \tfrac t{2^{k+1}} ))} \geq \frac 1 {c_D} \sqrt{ \mu(B(x,  \tfrac t{2^{k}}) ) \mu(B(y, \tfrac t{2^{k}}) )}, 
$$
and so we have
$$  {\mathcal K}_t \leq c_D \sum_{k=0}^{\infty} \mathcal{H}_{t/2^k}.$$

\item[(ii)]
Let us begin by writing more explicitely ${\mathcal S}_t$, by doing the integration in $x'$ first, which yields
$$
{\mathcal S}_t = \int_X \int_X |u(x)-u(y)|^p f_t(x,y)\, d \mu (x) \, d \mu (y),
$$
where we have set
$$
f_t(x,y) := \int_{ B(x,t) \cap B(y,t)} \frac 1{ \mu (B(x',t))^2} \, d \mu(x').
$$
Thus, it is sufficient to prove that 
$$
\frac c{\sqrt{ \mu(B(x,t)) \mu(B(y,t))}} \chi_{\{d(x,y) \leq t/2\}} \leq f_t(x,y) \leq \frac C{\sqrt{ \mu(B(x,2t)) \mu(B(y,2t))}} \chi_{\{d(x,y) \leq 2t\}}.
$$ 
For the second inequality, if $d(x,y) > 2t $ we have $f_t(x,y)=0$, since $B(x,t) \cap B(y,t) = \emptyset$. 
Moreover we can bound from above using $ \mu(B(x,t)) \leq \mu(B(x',2t)) \leq c_D \mu (B(x',t))$ and the same for $y$:
$$f_t(x,y) \leq c_D^2 \frac{ \mu( B(x,t) \cap B(y,t))}{\sqrt{ \mu(B(x,t)) \mu(B(y,t))}\mu(B(x,t))} \leq \frac {c_D^3}{\sqrt{ \mu(B(x,2t)) \mu(B(y,2t))}}. $$ 
For the first inequality we need only to check that if $d(x,y) \leq t/2$ then $f_t$ is bounded from below. But in this case we have $B(x',t) \subseteq B(x,2t) $ and so $\mu(B(x',t)) \leq \mu(B(x,2t))  $ and the same is true for $y$. In particular, since this time $B(x,t/2) \subset B(x,t) \cap B(y,t)$, we get 
$$f_t(x,y) \geq \frac{\mu(B(x,t/2)) }{\sqrt{ \mu(B(x,2t) \mu (B(y, 2t)} B(x, 2t)} \geq \frac 1{c_D^4\sqrt{ \mu(B(x,t)) \mu(B(y,t))}}. $$

\item[(iii)] We use the Poincar\'e inequality in the form (remember that $B$ is a ball of radius $t$)
\begin{align*}
\int_B | u_B - u(x)|^p \, d \mu(x) &\leq t^p c_P \int_{B} g^p(x) \, d \mu(x),  \\
\int_B | u_B - u(x)| \, d \mu(x) &\leq t c_P |Du|(B).
\end{align*}
In the spirit of treating the Sobolev and the BV case together,
we can write 
$$
\nu(E) = \int_E g^p d\mu,\qquad \nu(E)= |Du|(E),
$$ 
respectively. We then have, using Equation \eqref{eqn:compamean} and Poincar\'e inequality
\begin{align*} 
{\mathcal S}_{t} & \leq 2^p \int_X \frac 1{\mu(B(x,t))} \int_{B(x,t)} | u_{B(x,t)} - u(y)|^p \, d \mu(y) \, d \mu (x) \\
& \leq c_P 2^p t^p \int_X  \frac{\nu (B(x,t))}{\mu(B(x,t))} \, d \mu(x) = c_P (2t)^p  \int_{X\times X} \frac{\chi_{\{d(x,y) \leq t\}}(x,y)}{\mu(B(x,t))} \, d \mu \otimes \nu (x,y).
\end{align*}
Notice now that if $d(x,y) \leq t$ then we have $B(y,t) \subseteq B(x,2t)$ and in particular, using the doubling condition, $\mu(B(y,t)) \leq \mu(B(x,2t)) \leq c_D \mu( B(x,t))$. Then we deduce that
$$ \frac{\chi_{\{d(x,y) \leq t\}}(x,y)}{\mu(B(x,t))} \leq c_D \frac{\chi_{\{d(x,y) \leq t\}}(x,y)}{\mu(B(y,t))}.$$
Using Fubini-Tonelli we then get ${\mathcal S}_{t} \leq c_Pc_D (2t)^p \nu (X)$.
\item[(iv)] In this case, we want to control the part where $d \geq 1$ and so we will use the triangular inequality $|u(x)-u(y)|^p \leq 2^{p-1} (|u(x)|^p + |u(y)|^p)$ and also that $\rho(x,y) \geq C \mu(B(x,d(x,y)))$ and $\rho(x,y) \geq C \mu(B(y,d(x,y)))$, to get
\begin{align*}
{\mathcal K}_t &=  \iint_{\{d(x,y) \leq t\}} \frac{|u(x)-u(y)|^p}{\rho(x,y)} \,d\mu(x) \, d \mu(y), \\
 &= \mathcal{K}_1 + \iint_{\{1 \leq d(x,y) \leq t\}} \frac{|u(x)-u(y)|^p}{\rho(x,y)} \,d\mu(x) \, d \mu(y) \\
 & \leq \mathcal{K}_1 +  \frac{2^p}C \iint_{\{1 \leq d(x,y) \leq t\}} \frac{|u(x)|^p}{\mu (B(x,d(x,y))) } \,d\mu(x) \, d \mu(y) \\
 & =  \mathcal{K}_1 +  \frac{2^p}C \int_X |u(x)|^p \int_{\{1 \leq d(x,y) \leq t\}} \frac{1}{\mu (B(x,d(x,y))) }  \, d \mu(y) \, d \mu(x).
 \end{align*}
In order to estimate the last integral we divide in shells $S_k=\{y\, : \,  2^k \leq d(x,y) \leq 2^{k+1} \}$ and then we have
\begin{align*}  \int_{\{1 \leq d(x,y) \leq t\}} \frac{1}{\mu (B(x,d(x,y))) }  \, d \mu(y)  & \leq \sum_{k=0}^ {\lfloor \log_2(t) \rfloor} \int_{S_k} \frac 1{\mu (B(x,2^k))} \, d \mu(y) \\ 
&=  \sum_{k=0}^ {\lfloor \log_2(t) \rfloor} \frac {\mu(S_k)}{\mu (B(x,2^k))} \leq\lfloor \log_2(2t) \rfloor \cdot (c_D-1),
\end{align*}
which concludes the proof.
\end{itemize}


\end{proof}

\subsection{Lower bound of Theorem \ref{main}}

We first recall the following

\begin{definition}[Upper gradient]\rm 
	Given a function $f \in L^1+L^{\infty}$ and a function $g \geq 0$, we say that $g$ is an upper gradient up to scale $\delta$ of $f$ if for every curve $\gamma$ of length $\geq \delta$ we have 
	\begin{equation}\label{eqn:ug}
	|f(\gamma_a)-f(\gamma_b)| \leq \int_{\gamma} g.
	\end{equation}
\end{definition}
\noindent
Let us define $u^t =M_t u$ and
$$g_t(x') :=\frac 1{ \mu(B(x',t))^2}\iint_{B(x',t) \times B(x',t)} \left|\frac{u(x)-u(y)}t\right|^p \, d \mu(x) \, d \mu (y), $$
In this way we have 
$$
{\mathcal S}_t \geq t^{p} \int g_t(x')^p \, d \mu (x'),
$$ 
Now, the idea is that for some $C>0$, we have that $C g_{2t}$ is an upper gradient up to scale $t/2$ of the function $u^t$. This is significant thanks to Lemma \ref{lem:delta0} and Lemma \ref{lem:mtconv}.

The proof that $Cg_{2t}$ is  an upper gradient up to scale $t/2$ of $u^t$ is as follows: it is sufficient to check Equation \eqref{eqn:ug} only on curves that have length between $t/2$ and $t$, and then use the triangular inequality. So let us consider $\gamma: [a,b] \to X$ with length between $t/2$ and $t$. Then for every $c \in (a,b)$ we have $d(\gamma_c , \gamma_a) \leq t$ and $d (\gamma_c, \gamma_b) \leq t$. In particular $B(\gamma_a,t) \subseteq B(\gamma_c, 2t) \subseteq B(\gamma_a, 4t)$ and so

\begin{align*}
|u^t(\gamma_a) - u^t(\gamma_b)| & \leq \frac 1{\mu(B(\gamma_a,t)) \mu (B(\gamma_b,t))}  \int_{B(\gamma_a,t) \times B(\gamma_b,t)} | u(x)- u(y)|  \,d \mu(x) \, d \mu(y) \\
& \leq \frac 1{\mu(B(\gamma_a,t)) \mu (B(\gamma_b,t))} \int_{B(\gamma_c,2t) \times B(\gamma_c,2t)} | u(x) - u(y)| \,d \mu(x) \, d \mu(y) \\
& \leq \frac {c_D^4}{\mu(B(\gamma_a,4t)) \mu (B(\gamma_b,4t))} \int_{B(\gamma_c,2t) \times B(\gamma_c,2t)} | u(x) - u(y)| \,d \mu(x) \, d \mu(y) \\
& \leq 2tc_D^4 \cdot g_{2t} (\gamma_c,t) 
\end{align*}

In particular we have that, taking $h_t = 4c_D^4 g_{2t}$,
$$ \int_{\gamma} h_t  \geq \int_{\gamma} \frac 2{t} |u^t(\gamma_a) - u^t(\gamma_b)|  = \frac{2l(\gamma)}t  |u^t(\gamma_a) - u^t(\gamma_b)| \geq |u^t(\gamma_a) - u^t(\gamma_b)|.$$

Using Lemma \ref{lem:delta0} and \ref{lem:mtconv} we get $\liminf_{t \to 0} \frac{ \mathcal{S}_t}{t^p} \geq  C \cdot \Ch_p(X)$. Then we are done using $\mathcal{K}_t \geq c\mathcal{S}_{t/2}$

\begin{align*}
 \liminf_{s \to 1} (1-s)ps \int_0^{\infty} \frac{ \mathcal{K}_t }{t^{ps+1}} \, dt & \geq   \liminf_{s \to 1} (1-s)ps \int_0^1 \frac{ \mathcal{K}_t}{t^{ps+1}} \, dt \geq \liminf_{s \to 1} c \int_0^1  \frac{ \mathcal{S}_{t/2}}{t^p} \, d \nu_s \\ & \geq c \liminf_{t \to 0} \frac{ \mathcal{S}_{t/2}}{t^p} \geq  C \cdot \Ch_p(X),
 \end{align*}
 
where we used that $\nu_s=\frac{ (1-s)p }{t^{p(s-1)+1}}$ is a probability measure on $[0,1]$ that goes weakly to $\delta_0$.
%
\section{Proof of Theorem \ref{main2}}

\subsection{Upper bound of Theorem \ref{main2}}
\noindent
Consider the quantities
\begin{align*}
A_{\delta} &:= \mathop{\int_X\int_X}_{\{|u(x)-u(y)|>\delta\}} \frac{\delta^p}{\rho(x,y)d(x,y)^{p}}  \, d \mu(x) \, d \mu(y),  \\
B_{\delta, r} &:= \mathop{\int_X\int_X}_{\{|u(x)-u(y)|>\delta,\, d(x,y) \leq r \}} \frac{\delta^p}{\rho(x,y)d(x,y)^{p}}  \, d \mu(x) \, d \mu(y).
\end{align*}
Notice now that if $|u(x)-u(y)|>\delta$  we have $\delta^p \leq 2^{p-1} ( \delta^p \wedge |u(x)|^p +  \delta^p \wedge |u(y)|^p)$. Using this inequality and Lemma \ref{lem:key} we get
$$
B_{\delta, r} \leq A_{\delta} \leq  \frac{C}{r^{p}}\int_{X}(|u(x)|\wedge \delta)^p \, d \mu  + B_{\delta, r}. 
$$
In particular, thanks to dominated convergence, we deduce that for every $r>0$ the limit points as $\delta \to 0$ of $B_{\delta,r}$ and $A_{\delta}$ are the same. 
Now let us assume that $u \in W^{1,p} (X)$. In particular, by Proposition \ref{prop:HK} there exists a function $g \in L^p$ such that $\int g^p\, d \mu \leq C \cdot  \Ch_p (u)$ and for any $x,y$ with $d(x,y) \leq r$ we have
$$ 
|u(x)-u(y)| \leq d(x,y) ( g (x)+  g (y) ).
$$
But then, the triangular inequality let us conclude that in the subset $\{d(x,y) \leq r\}$ we have:
$$ 
\{  |u(x)-u(y)|>\delta \} \subseteq \{  Cd(x,y) \cdot g (x)  \geq \delta/2 \} \cup  \{ Cd(x,y) \cdot g (y)  \geq \delta/2 \}. 
$$
By symmetry then we can estimate
\begin{align*} B_{\delta,r}  & \leq 2 \mathop{\int_X\int_X}_{\{Cd(x,y) \cdot  g (x)  \geq \delta/2 , d(x,y) \leq r \}} \frac{\delta^p}{\rho(x,y) d(x,y)^{p}}  \, d \mu(x) \, d \mu(y) \\
& = 2 \int_X \int_{ d(x,y) \geq r_1(x) }  \frac{\delta^p}{\rho(x,y) d(x,y)^{p}}  \, d \mu(y) \, d \mu(x),
\end{align*}
where $r_1(x) = \frac { \delta}{2C g (x)}$. Using again Lemma \ref{lem:key} we get
$$ B_{\delta,r} \leq  \tilde C \int g(x)^p \, d \mu (x) \leq \tilde C \cdot C \cdot \Ch_p (u).$$ 

\noindent
\subsection{Lower bound of Theorem \ref{main2}} In this case we suppose, without loss of generality, that 
$$ \sup_{0 < \delta < 1} A_{\delta} \leq C.$$
Notice that then for every $r \leq 1$ we have
$$
C \geq \int_0^r \eps \delta^{\eps-1} A_{\delta} \, d \delta  = \frac{\eps}{p+\eps}  \int_X \int_X  \frac{\inf\{|u(x) - u(y)| , r\}^{p+\eps}}{\rho(x,y) d(x,y)^p} \,d \mu(x) \, d \mu (y). 
$$

Now let us define

$$ g_{t}: =\frac 1{ \mu(B(x',t))^2}\iint_{B(x',t) \times B(x',t)} \frac{\inf\{|u(x)-u(y))|,r\}}t \, d \mu(x) \, d \mu (y),$$
$$ g_{\varphi,t} :=\frac 1{ \mu(B(x',t))^2}\iint_{B(x',t) \times B(x',t)} \frac{|\varphi(u(x))-\varphi(u(y))|}t \, d \mu(x) \, d \mu (y),$$

and, with the same argument as in the proof of the lower bound in Theorem \ref{main}, we can estimate 

$$ C \geq \frac{\eps}{p+\eps}  \int_X \int_X  \frac{\inf\{|u(x) - u(y)| , r\}^{p+\eps}}{\rho(x,y) d(x,y)^p} \,d \mu(x) \, d \mu (y)  \geq  c \int_0^1 \frac{ \int_X g_t^{p+\eps} \, d \mu }{t^{p+\eps}} \, d \nu_{\eps}. $$

In particular there exists a sequence $t_{\eps} \to 0$ such that 
$$
\lim_{\eps \to 0} \int_X \left(\frac{g_{t_{\eps}}}{t_{\eps}} \right)^{p+\eps} \, d \mu  \leq C/c. 
$$
In particular, up to a subsequence we have $g_{t_{\eps}}/t_{\eps} \rightharpoonup h$ in $L^p_{{\rm loc}}(X, \mu)$ and $\int_X h^p \, d \mu  \leq C/c$.

Let us consider the class $\mathcal{L}_r \subset {\rm Lip}(\mathbb{R})$ of $1$-Lipschitz functions that have values in $[0,r]$; notice that for $\phi \in \mathcal{L}_r$ we have $|\phi(t)-\phi(s)|\leq |t-s|$ and $|\phi(t)-\phi(s)| \leq r$.  In particular we have $g_{\varphi, t} \leq g_t$; moreover we already know that, up to constants,  $g_{\varphi,t}$ is a weak upper gradient at scale $2t$ for $M_t (\varphi \circ u )$. This implies that $g_t$ is also a weak upper gradient at scale $2t$ for $M_t (\varphi \circ u )$ and using Lemma \ref{lem:delta0} and \ref{lem:mtconv} we find that $h$ is a $p$-weak upper gradient for $\varphi \circ u$ for every $\varphi \in \mathcal{L}_r$. 
Now we want to prove that $h$ is a $p$-weak upper gradient also for $u$. 
Thanks to Definition \ref{def:wug} , we have that for every $\phi$ and every $p$-plan $\pi$ , there exists a set $\mathcal{N}_\phi$ that is $\pi$ negligible, such that for $\gamma \not \in \mathcal{N}_{\phi}$ we have we have $\phi \circ (f \circ \gamma) \in W^{1,1}(0,1)$ and $ |(\phi \circ (u \circ \gamma ))' (t) | \leq h  \circ \gamma(t)  | \gamma'|(t)$.
In particular, we can take a countable dense set $\mathcal{S} \subset \mathcal{L}_r$ and, denoting by 
$$
\mathcal{N}=\bigcup_{\phi \in \mathcal{S} }\mathcal{N}_{\phi},  
$$
if $\gamma \not \in \mathcal{N}$ we have that  $f= u \circ \gamma$ and $g=h\circ \gamma | \gamma'|$ satisfy the hypothesis of Lemma \ref{lem:tech}, and in particular we have
$$ \forall \gamma \not \in \mathcal{N} \; \; u \circ \gamma \in W^{1,1}(0,1) \qquad \text{ and} \qquad  |u \circ \gamma ' (t)| \leq g \circ \gamma (t) | \gamma' | \quad \text{ for a.e. }t \in [0,1]. $$
Since $\mathcal{N}$ is a union of countably many $\pi$-negligible sets, it is itself $\pi$-negligible. Thanks to the arbitrariness of $\pi$, using again Definition \ref{def:wug} we conclude that $h$ is indeed a $p$-weak upper gradient for $u$.

\begin{lemma}\label{lem:tech} Let us consider $f : [0,1] \to \mathbb{R}$. Suppose there exists $g \in L^1(0,1)$ such that for every $\phi$ belonging to a dense subset of $\mathcal{L}_r$ we have $\phi \circ f \in W^{1,1}(0,1)$ and 
$ |(\phi \circ f )' (t) | \leq g (t)$ for $\mathcal{L}$-almost every $t \in [0,1]$.
Then $f \in W^{1,1}(0,1)$ and $|f'| \leq g$.
\end{lemma}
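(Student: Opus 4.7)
The plan is to reduce to the explicit one-parameter family of truncations $\phi_a(x):=\min(r,\max(0,x-a))\in \mathcal{L}_r$ and then reconstruct the symmetric truncation $\psi_{Kr}(x):=\min(Kr,\max(-Kr,x))$ of $f$ out of finitely many $\phi_{kr}\circ f$. The first step is to upgrade the hypothesis from the dense subset of $\mathcal{L}_r$ to all of $\mathcal{L}_r$: approximating a given $\phi\in\mathcal{L}_r$ pointwise (or uniformly on compacts) by functions in the dense family yields $\phi_n\circ f\to \phi\circ f$ in $L^1(0,1)$ by dominated convergence (each $\phi_n\circ f$ is bounded by $r$), and since the derivatives $(\phi_n\circ f)'$ are all dominated by the fixed $g\in L^1$ they form an equi-integrable family; a Dunford--Pettis extraction then produces a weak $L^1$ limit which is identified distributionally with $(\phi\circ f)'$ and satisfies $|(\phi\circ f)'|\leq g$ a.e. In particular $\phi_a\circ f\in W^{1,1}(0,1)$ with the same derivative bound for every $a\in\mathbb{R}$.

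Once this is in hand, the algebraic identity $\psi_{Kr}(x)+Kr=\sum_{k=-K}^{K-1}\phi_{kr}(x)$ immediately realizes $\psi_{Kr}\circ f$ as a finite sum of $W^{1,1}$ functions, so that $\psi_{Kr}\circ f\in W^{1,1}(0,1)$. The delicate point is to show $|(\psi_{Kr}\circ f)'|\leq g$ a.e., because the naive triangle inequality on $\sum_k(\phi_{kr}\circ f)'$ only gives $2Kg$, which is useless. The key observation is that at almost every $t$ at most one of the summands is nonzero. Indeed $\phi_{kr}\circ f$ equals $0$ on $\{f\leq kr\}$ and $r$ on $\{f\geq(k+1)r\}$, so by the Stampacchia vanishing-on-level-sets property for $W^{1,1}$ functions we have $(\phi_{kr}\circ f)'=0$ almost everywhere on $\{f\notin(kr,(k+1)r)\}$; for a.e.\ $t$ at most one index $k$ satisfies $f(t)\in (kr,(k+1)r)$, so the sum pointwise reduces to a single term already bounded by $g(t)$.

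Let $U_K$ denote the absolutely continuous representative of $\psi_{Kr}\circ f$; the preceding step yields the uniform oscillation bound $|U_K(t)-U_K(s)|\leq G:=\|g\|_{L^1(0,1)}$ for all $s,t\in[0,1]$. It remains to transfer the $W^{1,1}$ property from $U_K$ to $f$ itself, which amounts to showing that $f$ is essentially bounded. Since $\phi_a\circ f$ is measurable for each $a$, $f$ is measurable (via $\{f>a\}=\{\phi_a\circ f>0\}$), hence the set $\{|f|\leq N_0\}$ has positive Lebesgue measure for some $N_0>0$. Picking $t_0$ in this set and outside the countable union of null sets where $U_K\neq \psi_{Kr}(f)$ (over integer $K$) gives $U_K(t_0)=f(t_0)$ whenever $Kr\geq N_0$, hence $|U_K|\leq N_0+G$ throughout $[0,1]$ by the oscillation bound. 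Choosing $K$ so that $Kr>N_0+G$ forces the image of $U_K$ to lie strictly inside $(-Kr,Kr)$, where $\psi_{Kr}$ is the identity. Therefore $\psi_{Kr}(f)=U_K$ a.e.\ implies $f\in(-Kr,Kr)$ a.e., so $U_K=f$ a.e., and $f\in W^{1,1}(0,1)$ with $|f'|\leq g$ follows from the AC property and the derivative bound on $U_K$.

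The main obstacle I expect is the derivative bound in the decomposition step: without the Stampacchia level-set cancellation, the bound $|(\psi_{Kr}\circ f)'|\leq g$ cannot be obtained from the pieces, and the naive summation blows up linearly in $K$. A secondary subtlety is that $f$ is not assumed essentially bounded a priori, so the final oscillation-plus-truncation argument is what both certifies that boundedness and identifies $f$ with its AC representative.
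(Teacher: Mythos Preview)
Your proof is correct and follows essentially the same strategy as the paper: extend the hypothesis from the dense family to all of $\mathcal{L}_r$, decompose via the truncations $\phi_{kr}$, invoke the Stampacchia level-set property so that the derivatives $(\phi_{kr}\circ f)'$ have pairwise disjoint supports, and then use the resulting oscillation bound to show $f$ is essentially bounded and hence coincides with an absolutely continuous function. The only cosmetic differences are that the paper passes to the limit in the infinite series $\sum_{n\in\mathbb{Z}}\phi_n\circ f$ (showing $L^\infty$ convergence of partial sums) rather than your finite-sum identity for $\psi_{Kr}$, and that the paper handles the density step via the integral characterization $|\phi(f(x))-\phi(f(y))|\le\int_x^y g$, which is somewhat lighter than your Dunford--Pettis extraction.
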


\begin{proof} First of all let us observe that if the hypotesis is true for a dense subset of $\phi$ then it is true for every $\phi \in \mathcal{L}_r$ since it is equivalent to require 
$$|\phi(f(x))-\phi(f(y))| \leq \int_x^y  g(t) \, d t  \qquad \text{ for almost every }x<y \in [0,1],$$
which is a condition stable for uniform convergence of $\phi$.

Let us consider, for every $n \in \mathbb{N}$
$$\phi_n (t)=\begin{cases} 0 \qquad& \text{ if }t < rn \\ t-rn & \text{ if } rn \leq t < r(n+1) \\ r  & \text{ if }t \geq r(n+1); \end{cases}$$
we also define $\phi_{-n}(t)=-\phi_n(-t)$. Then clearly we have $\phi_n \in \mathcal{L}_r$; moreover $$\sum_{n \in \mathbb{Z}} \phi_n(t)=t.$$ 
Considering then $f_n= \phi_n \circ f$ we have 
\begin{equation}\label{eqn:sumf} \sum_{n \in \mathbb{Z}} f_n (x) =f(x) \qquad \text{ for every $x \in [0,1]$.}\end{equation}
 By hypothesis we have $f_n \in W^{1,1}$ and $|f_n'| \leq g$; however we have $f_n'=0$ almost everywhere in $\{f_n=0\} \cup \{f_n=r\}$ thnaks to stardand Sobolev theory. In particular denoting with $A_n= \{ rn \leq f< r(n+1)\}$ we have more precisely $|f_n'| \leq g \chi_{A_n}$.
Let us consider $N$ big enough such that $\{f \leq Nr\}$ is not negligible. Then we have that $\{f_n=0\}$ is not negligible for $n \geq N$ and then we have $\|f_n\|_{\infty} \leq \| g \chi_{A_n}\|_1$ thanks to the fact that there exists $x_0$ such that $f_n(x_0)=0$ and the estimate 
$$|f_n(x)|=|f_n(x)-f_n(x_0)| =\left| \int_{x_0}^x f_n' (y) \, dy \right| \leq \int_0^1 |f_n'|(y) \, d y \leq \| g \chi_{A_n}\|_1.$$ 
A similar argument can be used for $n$ very negative. Now we have that  $\| g \chi_{A_n} \|_1$ is summable and adds up to $\|g \|_1$. In particular this proves that $\sum_{|n| \leq N} f_n$ converges in $L^{\infty}$ to some function $\tilde f$ which will coincide with $f$ almost everywhere thanks to \eqref{eqn:sumf}. We will in particular have that 
$$ \sum_{|n| \leq N} f_n \stackrel{L^1}{\longrightarrow} f \qquad \sum_{|n| \leq N} f_n' \stackrel{L^1}{\longrightarrow} 	\bar{g} $$
where $\bar(g) = f_n' $ in $A_n$. In particular we have $f \in W^{1,1}$ and $f'=\bar{g}$; in particular $|f'| =|\bar{g}| \leq g$.
\end{proof}

\bigskip

\bigskip

\end{document}